\newcommand\redout{\bgroup\markoverwith
{\textcolor{red}{\rule[0.5ex]{2pt}{0.8pt}}}\ULon}
\def\VR{\kern-\arraycolsep\strut\vrule &\kern-\arraycolsep}
\def\vr{\kern-\arraycolsep & \kern-\arraycolsep}
\newtheorem{theorem}{Theorem}
\newtheorem{lemma}[theorem]{Lemma}
\newtheorem{prop}[theorem]{Proposition}
\theoremstyle{definition}
\newtheorem{definition}[theorem]{Definition}
\newtheorem{rmk}[theorem]{Remark}
\newtheorem{obs}{Observation}
\newtheorem{ex}{Example}
\newcommand{\Hom}{\operatorname{Hom}}
\newcommand{\End}{\operatorname{End}}
\newcommand{\rep}{\operatorname{rep}}
\newcommand{\Proj}{\operatorname{Proj}}
\newcommand{\SI}{\operatorname{SI}}
\newcommand{\SL}{\operatorname{SL}}
\newcommand{\GL}{\operatorname{GL}}
\newcommand{\PGL}{\operatorname{PGL}}
\newcommand{\ZZ}{\mathbb Z}
\newcommand{\PP}{\mathbb P}
\newcommand{\Ker}{\operatorname{Ker}}
\newcommand{\Ima}{\operatorname{Im}}
\newcommand{\Id}{\operatorname{Id}}
\newcommand{\Mat}{\operatorname{Mat}}
\newcommand{\Stab}{\operatorname{Stab}}
\newcommand{\ddim}{\operatorname{\mathbf{dim}}}
\newcommand{\dd}{\operatorname{\mathbf{d}}}
\newcommand{\ff}{\operatorname{\mathbf{f}}}
\newcommand{\rr}{\mathbf{r}}
\newcommand{\nn}{\mathbf{n}}
\newcommand{\R}{\operatorname{\mathcal{R}}}
\newcommand{\M}{\operatorname{\mathcal{M}}}
\newcommand{\comp}{\operatorname{Comp}}
\newcommand{\rk}{\operatorname{rank}}
\newcommand\restr[2]{{% we make the whole thing an ordinary symbol
  \left.\kern-\nulldelimiterspace % automatically resize the bar with \right
  #1 % the function
  \vphantom{\big|} % pretend it's a little taller at normal size
  \right|_{#2} % this is the delimiter
  }}
\newcommand{\key}[1]{\emph{#1}}
\begin{document}
\title{Moduli spaces of representations of special biserial algebras}
\author{Andrew T. Carroll}
\address{DePaul University, Department of Mathematical Sciences, Chicago, IL, USA}
\email[Andrew T. Carroll]{acarro15@depaul.edu}

\author{Calin Chindris}
\address{University of Missouri-Columbia, Mathematics Department, Columbia, MO, USA}
\email[Calin Chindris]{chindrisc@missouri.edu}

\author{Ryan Kinser}
\address{University of Iowa, Department of Mathematics, Iowa City, IA, USA}
\email[Ryan Kinser]{ryan-kinser@uiowa.edu}

\author{Jerzy Weyman}
\address{University of Connecticut, Department of Mathematics, Storrs, CT, USA}
\email[Jerzy Weyman]{jerzy.weyman@uconn.edu} 
\thanks{C.C. was supported by NSA grant H98230-15-1-0022 and J.W. by NSF grant DMS-1400740}

%\date{\today}
\bibliographystyle{amsalpha}
\subjclass[2010]{16G20, 14D20}
\keywords{gentle algebras, moduli spaces, representations, regular irreducible components, special biserial algebras, varieties of circular complexes}

\begin{abstract}
We show that the irreducible components of any moduli space of
semistable representations of a special biserial algebra are always
isomorphic to products of projective spaces of various dimensions.
This is done by showing that irreducible components of varieties of
representations of special biserial algebras are isomorphic to irreducible components of products of varieties of circular complexes, and therefore normal, allowing us to apply recent results of the second and third authors on moduli spaces.
\end{abstract}

\maketitle
\setcounter{tocdepth}{1}
\tableofcontents

\section{Introduction}
Throughout, $K$ denotes an algebraically closed field of
characteristic zero. Unless otherwise specified, all quivers are
assumed to be finite and connected, and all algebras are assumed to be bound quiver algebras.

In this paper, we study representations of algebras within the general framework of Geometric Invariant Theory (GIT). This interaction between representations of algebras and GIT leads to the construction of moduli spaces of representations as solutions to the classification problem of semistable representations, up to S-equivalence. We point out that these moduli spaces can be arbitrarily complicated; indeed, arbitrary projective varieties can arise as moduli spaces of representations of algebras \cite{Hil,HZ2}.

Our goal in this paper is to understand these moduli spaces for special biserial algebras. The results we obtain here are, in fact, part of a program aimed at finding geometric characterizations of the representation type of bound quiver algebras. This line of research has attracted a lot of attention, see for example \cite{BleChi2009, BleChiHui-Zim15, Bob1, BS1, Bob5, Bob4, Carroll1, CC6, CC9, CC12, CC15, ChiKli16, Domo2, GeiSch, Rie, Rie-Zwa-1, Rie-Zwa-2, SW1}.

Special biserial algebras play a prominent role in the representation theory of algebras and related areas. Their indecomposable representations can be nicely described, however the number of $1$-parameter families needed to parametrize the $n$-dimensional indecomposables can grow faster than any polynomial in $n$. 
Algebraists have been interested in biserial and special biserial algebras for at least 50 years \cite{Tachikawa61,GP68,Fuller,SkW,MR801283,BR,Bleher98,EHIS,Herschend10}.
Special biserial algebras also naturally appear when studying tame blocks of group algebras of finite groups \cite{Janusz69,Rin75,DF78,Erd-1990,Roggen}.
Furthermore, gentle algebras and Brauer graph algebras, which are particular cases of special biserial algebras, have recently played an important role in the study of Jacobian and cluster algebras, see for example \cite{Labardini09, AsBruChaPla2010, GeiLab-FraSch2016, MarSch2014}.

Our main result is the following theorem which describes the irreducible components of moduli spaces for special biserial algebras.

\begin{theorem}\label{main-thm} Let $A$ be a special biserial algebra.  Then any irreducible component of a moduli space $\M(A,\dd)^{ss}_\theta$ is isomorphic to a product of projective spaces.
\end{theorem}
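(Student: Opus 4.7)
The strategy, outlined in the abstract, proceeds in three stages.

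\emph{Stage 1: Geometric model.} The first task is to show that each irreducible component of the representation variety $\rep(A,\dd)$ is isomorphic to an irreducible component of a product of varieties of circular complexes. For a special biserial algebra $A$, each vertex carries at most two arrows entering and two arrows leaving, and the defining ideal is generated by monomial zero-relations together with commutativity-type biserial relations. By exploiting this local structure vertex-by-vertex and tracking the combinatorial data coming from strings and bands, one can realize $\rep(A,\dd)$ as the closed subvariety of an ambient product of $\Hom$-spaces cut out by precisely the zero-composition conditions defining circular complexes, with the cycles assembled by walking along the ``permitted'' paths of $A$ and wrapping around at the biserial vertices.

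\emph{Stage 2: Normality.} Varieties of circular complexes have been studied extensively; their irreducible components, indexed by admissible rank sequences, are known to be normal and to have rational singularities. Transported through the identification from Stage 1, this yields that every irreducible component of $\rep(A,\dd)$ is normal whenever $A$ is special biserial.

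\emph{Stage 3: Apply the moduli space machinery.} For a normal $\GL(\dd)$-invariant irreducible component $C \subseteq \rep(A,\dd)$, the recent results of the second and third authors describe the $\theta$-semistable GIT quotient of $C$ by $\GL(\dd)$ explicitly as a product of projective spaces. Applying this fact to each irreducible component of $\rep(A,\dd)$ that meets the $\theta$-semistable locus, and noting that the irreducible components of $\M(A,\dd)^{ss}_\theta$ come from these, gives the theorem.

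\emph{Main obstacle.} The crux is Stage 1: one must (i) identify the correct cyclic structure at each vertex so that the full set of special biserial relations is captured by the zero-composition conditions on some product of circular complex varieties, and (ii) verify that the resulting map sets up a bijection between the irreducible components on the two sides, not merely a birational correspondence, so that normality transfers cleanly. Once this geometric reduction is in place, Stages 2 and 3 amount to black-box applications of existing results.
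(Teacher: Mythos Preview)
Your outline has the right shape in Stages~1 and~2, and the paper indeed reduces normality questions to circular complexes (via Ringel's observation that every special biserial algebra is a quotient of a \emph{complete gentle} algebra, for which $\rep(\Lambda,\dd)$ is literally a product of circular-complex varieties; this is cleaner than the vertex-by-vertex analysis you sketch). But Stage~3 contains a genuine gap, in two places.

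First, the decomposition theorem of \cite{CKdecomp} does not assert that the moduli space of a normal component is a product of projective spaces. What it gives is: for the $\theta$-stable decomposition $C=m_1 C_1\pp\ldots\pp m_l C_l$, and with $\widetilde{C}:=\overline{C_1^{\oplus m_1}\oplus\cdots\oplus C_l^{\oplus m_l}}$, a finite birational morphism
\[
\Psi\colon S^{m_1}(\M(C_1)^{ss}_\theta)\times\cdots\times S^{m_l}(\M(C_l)^{ss}_\theta)\to \M(\widetilde{C})^{ss}_\theta
\]
which is an isomorphism provided the target is normal. The normality you must check is therefore that of $\widetilde{C}$, and $\widetilde{C}$ is in general \emph{not} an irreducible component of $\rep(A,\dd)$; Stages~1--2 as you state them do not cover it. The paper's Proposition~\ref{normality-prop-complete-gentle} is precisely the missing step: under the hypotheses that each $C_i$ is Schur and $\hom_A(C_i,C_j)=0$ (both of which follow from $\theta$-stability), a dimension count forces $\widetilde{C}$ to coincide with some $\rep(\Lambda,\dd,\rr)$ for the complete gentle cover $\Lambda$, hence to be normal.

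Second, even after $\Psi$ is an isomorphism you only have a product of symmetric powers; nothing yet says these are projective spaces. The paper uses that special biserial algebras are tame, so each $\M(C_i)^{ss}_\theta$ is a rational projective curve, and then normality of $C_i$ (again from Proposition~\ref{normality-prop-complete-gentle}, case $m=1$) forces $\M(C_i)^{ss}_\theta\simeq\PP^1$. Finally one invokes $S^m(\PP^1)\simeq\PP^m$. Your Stage~3 skips all of this, and without the tameness input there is no reason the factors should be $\PP^1$.
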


The isomorphism of the theorem results from a general decomposition theorem for moduli spaces proved in \cite{CKdecomp}.  The key geometric condition needed to apply this theorem is that certain representation varieties are normal. In this paper, we show in Proposition \ref{normality-prop-complete-gentle} that this condition holds in all cases relevant to special biserial algebras by reducing the consideration to varieties of circular complexes (see Sections \ref{normal-circular-complexes-sec} and \ref{normal-sec}).

\subsection*{Acknowledgements}  
This project began during a visit of the first three authors to the University of Connecticut. The authors would like to acknowledge the generous support of the Stuart and Joan Sidney Professorship of Mathematics endowment for making the visit possible.  We also thank Amelie Schreiber for participating in discussions about the project, Corrado de Concini for inspiring conversations, and the referees for comments improving the paper and simplifying some arguments.

\section{Representation varieties and moduli spaces} 
\subsection{Representation varieties} 
Since $K$ is algebraically closed, any finite-dimensional unital,
associative $K$-algebra $A$ can be viewed as a bound quiver algebra,
up to Morita equivalence; that is there exists a finite quiver $Q$, uniquely determined by $A$, and an admissible ideal $I$ of $KQ$ such that $A \simeq KQ/I$. Throughout, we will adopt the language of
representations of bound quivers. In particular, by abuse of
terminology, we refer to a representation of $Q$ satisfying the
relations in $I$ as a representation of $A$.  Whenever we work with a
set of generators $\R$ for $I$, we will always assume each generator
is a linear combination of paths with the same source and target
vertex.  If $\R$ generates an admissible ideal in $KQ$, we call the
pair $(Q,\R)$ a bound quiver. We assume throughout that $Q$ has has
finitely many vertices and finitely many arrows, and hence the algebra $KQ/\langle \R\rangle$ is finite-dimensional.

We write $Q_0$ for the set of vertices of $Q$, and $Q_1$ for its set
of arrows. For a dimension vector $\dd \in \ZZ^{Q_0}_{\geq 0}$, the
affine \emph{representation variety} $\rep(A,\dd)$ parametrizes the
$\dd$-dimensional representations of $(Q, \R)$ along with a fixed basis. Writing $ta$ and $ha$ for the tail and head of an arrow $a \in Q_1$, we have:
$$
\rep(A,\dd):=\{M \in \prod_{a \in Q_1} \Mat_{\dd(ha)\times \dd(ta)}(K) \mid M(r)=0, \text{~for all~} r \in
\R \}.
$$
Under the action of the change of base group $\GL(\dd):=\prod_{x\in
  Q_0}\GL(\dd(x),K)$, the orbits in $\rep(A,\dd)$ are in one-to-one
correspondence with the isomorphism classes of $\dd$-dimensional
representations of $(Q, \R)$. For more background on representation varieties, see \cite{Bon5,Zwarasurvey}.

In general, $\rep(A, \dd)$ does not have to be irreducible. Let $C$ be an irreducible component of $\rep(A, \dd)$. We say that $C$ is \emph{indecomposable} if $C$ has a non-empty open subset of indecomposable representations. We say that $C$ is a \emph{Schur component} if $C$ contains a Schur representation, in which case $C$ has a non-empty open subset of Schur representations; in particular, any Schur component is indecomposable. 

For dimension vectors $\dd_i \in \ZZ^{Q_0}_{\geq 0}, 1 \leq i \leq l$, and $\GL(\dd_i)$-invariant constructible subsets $C_i\subseteq \rep(A,\dd_i)$, $1 \leq i \leq l$, we denote by $C_1\oplus \ldots \oplus C_l$ the constructible subset of $\rep(A,\sum_{i=1}^l \dd_i)$ defined by  
$$C_1\oplus \ldots \oplus C_l=\{M \in \rep(A,\sum_{i=1}^l \dd_i) \mid M\simeq \bigoplus_{i=1}^l M_i\text{~with~} M_i \in C_i, \forall 1 \leq i \leq l\}.$$
As shown by de la Pe\~na in \cite{delaP} and Crawley-Boevey and Schr{\"o}er in \cite[Theorem 1.1]{C-BS} any irreducible component $C \subseteq \rep(A,\dd)$ satisfies a Krull-Schmidt type decomposition
$$
C=\overline{C_1 \oplus \ldots \oplus C_l}
$$
for some indecomposable irreducible components $C_i \subseteq \rep(A,\dd_i)$ with $\sum \dd_i=\dd$. Moreover, $C_1, \ldots, C_l$ are uniquely determined by this property. 

\subsection{Semi-Invariants}
Let $A=KQ/I$ be an algebra and $\dd \in \ZZ^{Q_0}_{\geq 0}$ a dimension vector of $A$. 
We are interested in the action of $\SL(\dd):=\prod_{x \in Q_0}\SL(\dd(x),K)$ on the representation variety $\rep(A,\dd)$. The resulting \key{ring of semi-invariants} $\SI(A,\dd):=K[\rep(A,\dd)]^{\SL(\dd)}$ has a weight space decomposition over the group $X^{\star}(\GL(\dd))$ of rational characters of $\GL(\dd)$:
$$\SI(A,\dd)=\bigoplus_{\chi \in X^\star(\GL(\dd))}\SI(A,\dd)_{\chi}.
$$
For each character $\chi \in X^{\star}(\GL(\dd))$, $$\SI(A,\dd)_{\chi}=\lbrace f \in K[\rep(A,\dd)] \mid g \cdot f= \chi(g)f \text{~for all~}g \in \GL(\dd)\rbrace$$ is called the \key{space of semi-invariants} on $\rep(A,\dd)$ of \key{weight} $\chi$. 

For a $\GL(\dd)$-invariant closed subvariety $C \subseteq \rep(A,\dd)$, we similarly define the ring of semi-invariants $\SI(C):=K[C]^{\SL(\dd)}$, and the space $\SI(C)_{\chi}$ of semi-invariants of weight $\chi \in X^{\star}(\GL(\dd))$.

Note that any $\theta \in \ZZ^{Q_0}$ defines a rational character $\chi_{\theta}:\GL(\dd) \to K^*$ by 
\begin{equation}
\chi_{\theta}((g(x))_{x \in Q_0})=\prod_{x \in Q_0}\det g(x)^{\theta(x)}.
\end{equation}
In this way, we get a natural epimorphism $\ZZ^{Q_0} \to X^{\star}(\GL(\dd))$; we refer to the rational characters of $\GL(\dd)$ as integral weights of $Q$ (or $A$). In case $\dd$ is a sincere dimension vector, this epimorphism is an isomorphism which allows us to identify $\ZZ^{Q_0}$ with $X^{\star}(\GL(\dd))$. 

\subsection{Moduli spaces of representations}\label{moduli-sec}
Let $(Q,\R)$ be a bound quiver, and $\theta \in \ZZ^{Q_0}$ an integral
weight of $Q$. Following King \cite{K}, a representation $M$ of $(Q,
\R)$ is said
to be \emph{$\theta$-semistable} if $\theta(\ddim M)=0$ and
$\theta(\ddim M')\leq 0$ for all subrepresentations $M' \leq M$. We
say that $M$ is \emph{$\theta$-stable} if $M$ is non-zero,
$\theta(\ddim M)=0$, and $\theta(\ddim M')<0$ for all
subrepresentations $0 \neq M' < M$. Finally, we call $M$ a
\key{$\theta$-polystable} representation if $M$ is a direct sum of $\theta$-stable representations.

Now, let $\dd$ be a dimension vector of $(Q,\R)$ and consider the (possibly empty) open subsets
$$\rep(A,\dd)^{ss}_{\theta}=\{M \in \rep(A,\dd)\mid M \text{~is~}
\text{$\theta$-semistable}\}$$
and $$\rep(A,\dd)^s_{\theta}=\{M \in \rep(A,\dd)\mid M \text{~is~}
\text{$\theta$-stable}\}$$
of $\dd$-dimensional $\theta$-(semi)stable representations of $(Q, \R)$. Using methods from Geometric Invariant Theory, King shows in \cite{K} that the projective variety
$$
\M(A,\dd)^{ss}_{\theta}:=\Proj\left(\bigoplus_{n \geq 0}\SI(A,\dd)_{n\theta}\right)
$$
is a GIT-quotient of $\rep(A,\dd)^{ss}_{\theta}$ by the action of $\PGL(\dd)$ where $\PGL(\dd)=\GL(\dd)/T_1$ and $T_1=\{(\lambda \Id_{\dd(x)})_{x \in Q_0} \mid \lambda \in k^*\} \leq \GL(\dd)$. Moreover, there is a (possibly empty) open subset $\M(A,\dd)^s_{\theta}$ of $\M(A,\dd)^{ss}_{\theta}$ which is a geometric quotient of $\rep(A,\dd)^s_{\theta}$ by $\PGL(\dd)$. We say that $\dd$ is a \emph{$\theta$-(semi)stable dimension vector} of $A$ if $\rep(A,\dd)^{(s)s}_{\theta} \neq \emptyset$. 

For a given $\GL(\dd)$-invariant closed subvariety $C$ of $\rep(A,\dd)$, we similarly define $C^{ss}_{\theta}, C^s_{\theta}$, $\M(C)^{ss}_{\theta}$, and $\M(C)^s_{\theta}$. We say that $C$ is a \emph{$\theta$-(semi)stable subvariety} if $C^{(s)s} \neq \emptyset$. 

From now on, let us assume that the character $\chi_{\theta} \in X^\star(\GL(\dd))$ induced by $\theta$ is not trivial, i.e. the restriction of $\theta$ to the support of $\dd$ is not zero, and denote by $G_{\theta}$ the kernel of $\chi_{\theta}$. Let $C$ be a $\theta$-semistable $\GL(\dd)$-invariant, irreducible, closed subvariety of $\rep(A,\dd)$. Then we claim that
$$
K[C]^{G_{\theta}}=\bigoplus_{n \geq 0} \SI(C)_{n \theta}.
$$
To justify this claim, consider first the action of the
$1$-dimensional torus $\GL(\dd)/G_{\theta}$ on $K[C]^{G_{\theta}}$.
Every character of this torus is induced from a character of $\GL(\dd)$ of the form $\chi_{n\theta}, n \in \ZZ$ since we work in characteristic 0.
(This can fail to be true in characteristic $p>0$ since $\det$ is a well-defined character of $\GL(d,K)/\ker (\det^p)=\GL(d,K)/\ker (\det)$.)
This yields the weight space decomposition $\bigoplus_{n \in \ZZ}
\SI(C)_{n \theta}$. It remains to show that $\SI(C)_{n \theta} =\{0\}$
for all integers $n<0$. For this we will use that $C^{ss}_{\theta}
\neq \emptyset$ and that $\chi_{\theta}$ is not the trivial
character. So, assume for a contradiction that there exists an integer
$n<0$ such that $\SI(C)_{n \theta} \neq \{0\}$. Then, we get a
representation $M \in C$ that is semistable with respect to both
$-\theta$ and $\theta$. In particular, this gives $\theta(\ddim M')=0$
for all subrepresentations $M' \leq M$. It is now clear that if $S_i$
is a simple representation that occurs as a composition factor in a
Jordan-H{\"o}lder filtration of $M$ then $\theta(i)=\theta(\ddim
S_i)=0$. Since $A$ is finite-dimensional, $M$ admits a
Jordan-H{\"o}lder filtration by the simples $S_i$. Thus, the restriction
of $\theta$ to the support of $\dd$ is zero (contradiction).

The restriction homomorphism $K[\rep(A,\dd)] \to K[C]$ remains surjective after taking $G_{\theta}$-invariants since $G_{\theta}$ is linearly reductive in characteristic zero. This surjective homomorphism $K[\rep(A,\dd)]^{G_{\theta}} \to K[C]^{G_{\theta}}$ of graded algebras gives rise to a closed embedding $\M(C)^{ss}_{\theta} \hookrightarrow \M(A,\dd)^{ss}_{\theta}$. In fact, the image of this embedding is precisely $\pi(C^{ss}_{\theta})$, where $\pi:\rep(A,\dd)^{ss}_{\theta} \to \M(A,\dd)^{ss}_{\theta}$ is the quotient morphism.

The points of $\M(C)^{ss}_{\theta}$ correspond bijectively to the (isomorphism classes of) $\theta$-polystable representations in $C$. Indeed, each fiber of $\pi:C^{ss}_{\theta} \to \M(C)^{ss}_{\theta}$ contains a unique closed $\GL(\dd)$-orbit in $C^{ss}_{\theta}$. On the other hand, as proved by King in \cite[Proposition 3.2(i)]{K}, these orbits are precisely the isomorphism classes of $\theta$-polystable representation in $C$. In fact, for any $M \in C^{ss}_{\theta}$, there exists a $1$-parameter subgroup $\lambda \in X_{\star}(G_{\theta})$ such that $\widetilde{M}:=\lim_{t \to 0} \lambda(t)M$ exists and is the unique, up to isomorphism, polystable representation in $\overline{\GL(\dd)M} \cap C^{ss}_{\theta}$.

The goal now is to explain how to decompose a given irreducible component of a moduli space of representations into smaller spaces which are easier to handle.  The following definition is from \cite{CKdecomp}.

\begin{definition}\label{def:thetastable}
Let $C$ be a $\GL(\dd)$-invariant, irreducible, closed subvariety of $\rep(A,\dd)$, and assume $C$ is $\theta$-semistable. Consider a collection $(C_i \subseteq \rep(A,\dd_i))_i$ of $\theta$-stable irreducible components such that $C_i \neq C_j$ for $i \neq j$, along with a collection of multiplicities $(m_i \in \ZZ_{>0})_i$.
We say that $(C_i, m_i)_i$ is a \key{$\theta$-stable decomposition of $C$} if, for a general representation $M \in C^{ss}_{\theta}$, its corresponding $\theta$-polystable representation $\widetilde{M}$ is in $C_1^{\oplus m_1} \oplus \cdots \oplus C_l^{\oplus m_l} $, and write
\begin{equation}\label{eq:thetastabledef}
C=m_1C_1\pp \ldots \pp m_l C_l.
\end{equation}
\end{definition}

Any $\GL(\dd)$-invariant, irreducible, closed subvariety $C$ of $\rep(A,\dd)$ with $C^{ss}_{\theta} \neq \emptyset$ admits a $\theta$-stable decomposition \cite[Proposition 3]{CKdecomp}. This decomposition controls the geometry of irreducible components of moduli spaces in the following sense.
Below, recall that the \key{$m^{th}$ symmetric power} $S^m(X)$ of a variety $X$ is the quotient of $\prod_{i=1}^m X$ by the action of the symmetric group on $m$ elements which permutes the coordinates.

\begin{theorem}\label{decomp-thm} \cite[Theorem 1]{CKdecomp}
Let $A$ be a finite-dimensional algebra and let $C \subseteq \rep(A,\dd)$ be a $\GL(\dd)$-invariant, irreducible, closed subvariety.  Let $C=m_1C_1\pp \ldots \pp m_l C_l$ be a $\theta$-stable decomposition of $C$ where $C_i \subseteq \rep(A,\dd_i)$, $1 \leq i \leq l$, are pairwise distinct $\theta$-stable irreducible components, and define $\widetilde{C} = \overline{C_1^{\oplus m_1} \oplus \cdots \oplus C_l^{\oplus m_l}}$.
\begin{enumerate}[(a)]
\item 
If $\M(C)^{ss}_{\theta}$ is an irreducible component of $\M(A,\dd)^{ss}_{\theta}$, then
$$\M(\widetilde{C})^{ss}_{\theta}=\M(C)^{ss}_{\theta}.$$
\item If $C_1$ is an orbit closure, then $$\M(\overline{C_1^{\oplus m_1} \oplus \cdots \oplus C_l^{\oplus m_l}})^{ss}_{\theta} \simeq \M(\overline{C_2^{\oplus m_2} \oplus \cdots \oplus C_l^{\oplus m_l}})^{ss}_{\theta}.$$ 
\item Assume now that none of the $C_i$ are orbit closures.
Then there is a natural morphism
\[
\Psi\colon  S^{m_1}(\mathcal{M}(C_1)^{ss}_{\theta}) \times \ldots \times S^{m_l}(\mathcal{M}(C_l)^{ss}_{\theta})  \to \M(\widetilde{C})^{ss}_{\theta}
\]
which is finite and birational. In particular, if $\M(\widetilde{C})^{ss}_{\theta}$ is normal then $\Psi$ is an isomorphism. 
\end{enumerate}
\end{theorem}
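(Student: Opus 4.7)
The plan is to exploit the bijection between closed points of the GIT quotient and isomorphism classes of $\theta$-polystable representations, so that all three assertions reduce to tracking closed $\GL(\dd)$-orbits inside $C^{ss}_\theta$ and $\widetilde{C}^{ss}_\theta$.

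For part (a), I would first verify the inclusion $\M(C)^{ss}_\theta \subseteq \M(\widetilde{C})^{ss}_\theta$ inside $\M(A,\dd)^{ss}_\theta$. By the definition of a $\theta$-stable decomposition, a general $M \in C^{ss}_\theta$ has its associated $\theta$-polystable representation $\widetilde{M}$ lying in $C_1^{\oplus m_1} \oplus \cdots \oplus C_l^{\oplus m_l} \subseteq \widetilde{C}$; hence $\pi(M) = \pi(\widetilde{M}) \in \M(\widetilde{C})^{ss}_\theta$, where $\pi \colon \rep(A,\dd)^{ss}_\theta \to \M(A,\dd)^{ss}_\theta$ is the quotient morphism. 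Since this holds on a dense open subset of $C^{ss}_\theta$ and $\M(\widetilde{C})^{ss}_\theta$ is closed in $\M(A,\dd)^{ss}_\theta$, the inclusion follows, and the maximality of $\M(C)^{ss}_\theta$ as an irreducible component upgrades it to equality. Part (b) rests on the observation that if $C_1 = \overline{\GL(\dd_1)\cdot M_1}$ for a $\theta$-stable $M_1$, then every $\theta$-polystable representation in $\overline{C_1^{\oplus m_1}\oplus\cdots\oplus C_l^{\oplus m_l}}$ has $M_1^{\oplus m_1}$ as a fixed direct summand, with the complementary summand a $\theta$-polystable in $\overline{C_2^{\oplus m_2}\oplus\cdots\oplus C_l^{\oplus m_l}}$. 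The bijection $N \mapsto M_1^{\oplus m_1}\oplus N$ on isomorphism classes of polystables is then promoted to a scheme-theoretic isomorphism of moduli spaces using linear reductivity of $G_\theta$ and the surjectivity of restriction on $G_\theta$-invariants.

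For part (c), define $\Psi$ on closed points by sending an unordered tuple $\bigl([N_i^{(1)}],\ldots,[N_i^{(m_i)}]\bigr)_{i=1}^{l}$ of points in $\M(C_i)^{ss}_\theta$ to the isomorphism class of $\bigoplus_{i,j} N_i^{(j)}$. Globally, $\Psi$ is induced by the direct-sum morphism $\prod_i \rep(A,\dd_i)^{\times m_i} \to \rep(A,\sum_i m_i \dd_i)$ composed with the quotient, descending through the symmetric group actions. Surjectivity on closed points is immediate from the description of polystables in $\widetilde{C}$. For birationality, I would pass to the open locus where each $N_i^{(j)}$ is $\theta$-stable and the collection $\{N_i^{(j)}\}_{j=1}^{m_i}$ consists of pairwise non-isomorphic representations within each $i$: the pairwise distinctness of the $C_i$ prevents cross-identifications across different indices, and injectivity on this locus follows from the Krull-Schmidt theorem. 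A tangent-space computation using $\End(N_i^{(j)}) = K$ and $\Hom(N_i^{(j)}, N_{i'}^{(j')}) = 0$ for non-isomorphic $\theta$-stable summands then shows that $\Psi$ is \'etale there. Finiteness holds because the fiber of $\Psi$ over a fixed $\theta$-polystable $M$ is parametrized by the finite set of ways to partition the isomorphic $\theta$-stable summands of $M$ among the factors. The final isomorphism claim then follows from the classical fact that a finite birational morphism to a normal target is an isomorphism.

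The main difficulty will lie in part (c), specifically in verifying that $\Psi$ is finite and birational as a morphism of schemes, not merely a bijection on closed points. Birationality requires a genuine Krull-Schmidt-type uniqueness of $\theta$-stable decompositions up to the action of the product of symmetric groups, and finiteness demands a careful control of the fiber combinatorics when isomorphic $\theta$-stable summands appear in several factors. Once both properties are in place, the normality hypothesis invokes Zariski's main theorem to deliver the scheme-theoretic isomorphism.
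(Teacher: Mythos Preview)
The paper does not actually prove this theorem: it is quoted verbatim from \cite[Theorem~1]{CKdecomp} and then \emph{applied} in the proof of Theorem~\ref{main-thm}. There is therefore no ``paper's own proof'' to compare against; your proposal is a sketch of how the cited result might be established, not a reconstruction of anything in the present paper.

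That said, your outline is broadly along the lines one would expect for such a decomposition theorem, with one genuine gap worth flagging. In part~(c) you claim finiteness of $\Psi$ because each fiber over a $\theta$-polystable $M$ is a finite set of partitions of its stable summands. This only gives \emph{quasi}-finiteness. Finiteness requires in addition that $\Psi$ be proper; you should observe that each $\M(C_i)^{ss}_\theta$ is projective (as a closed subvariety of the projective variety $\M(A,\dd_i)^{ss}_\theta$), hence so is the product of symmetric powers, and a morphism from a projective variety to a separated variety is proper. Proper plus quasi-finite then yields finite. Without this step, the final appeal to Zariski's main theorem is not justified. A smaller point: in part~(b), asserting that the bijection on polystables ``is then promoted to a scheme-theoretic isomorphism using linear reductivity'' hides the actual work; one typically realizes the map in one direction via a direct-sum morphism and checks it is a closed immersion whose image has the correct closed points.
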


Note that given any (non-empty) moduli space $\M(A,\dd)^{ss}_{\theta}$, its irreducible components are all of the form $\M(C)^{ss}_{\theta}$ with $C$ a $\theta$-semistable irreducible component of $\rep(A,\dd)$. Thus, the theorem covers all the irreducible components of $\M(A,\dd)^{ss}_{\theta}$ and not just those of some special form.

Recall that a Schur-tame algebra is an algebra such that, in each dimension vector, all Schur representations (except possibly finitely many) come in a finite number of $1$-parameter families (see \cite[Definition 3]{CC15} for more details). For a Schur-tame algebra, each $\mathcal{M}(C_i)^{ss}_{\theta}$ appearing in the theorem has dimension 0 if $C_i$ is an orbit closure, and dimension 1 otherwise (see \cite[Proposition 12]{CC15}). Therefore, the dimension of $\M(C)^{ss}_{\theta}$ is precisely the sum of the multiplicities of the components which are not orbit closures.

\section{Varieties of circular complexes} \label{normal-circular-complexes-sec}
\subsection{Definition}
Fix a positive integer $l$ and an $l$-tuple of positive integers
$\nn=(n_i)_{i \in \ZZ/ l\ZZ}$ (for convenience in indices, we denote
the residue class of an integer $i$ modulo $l\ZZ$ by the same letter $i$). We are interested in the variety
$$
\comp(\nn):=\{(A_i)_{i \in \ZZ/l \ZZ} \in \prod_{i \in \ZZ/l\ZZ} \Mat_{n_{i+1} \times n_{i}}(K)\mid A_{i+1} A_i=0,\forall i \in \ZZ/ l\ZZ\},
$$
called the \emph{variety of circular complexes associated to $\nn$}. By convention, if $l=1$, we get the variety of matrices $A$ of size $n_0\times n_0$ with $A^2=0$.

Our goal in this section is to describe certain subvarieties of $\comp(\nn)$ given by rank conditions. %First, we explain how to ``label'' the irreducible
%components of $\comp(\nn)$ by maximal rank sequences. For this initial
%step, we follow the same strategy as that for dealing with varieties
%of (non-circular) complexes (see for example the presentation in
%\cite[Section 3]{Carroll1}). 
It is useful to view $\comp(\nn)$ as a
representation variety for the following bound quiver. Consider the oriented cycle $\mathcal C$ with vertex set $\ZZ/l\ZZ$:
$$\mathcal{C}:~
\vcenter{\hbox{  
\begin{tikzpicture}[point/.style={shape=circle, fill=black, scale=.3pt,outer sep=3pt},>=latex]
   \node[point,label={above:$0$}] (0) at (0,0) {};
   \node[point,label={above:$1$}] (1) at (1.5,-.5) {};
   \node[point,label={left:$l-1$}] (l-1) at (-1.5,-.5) {};
   \node[point] (2) at (0,-2.5) {};
   \node[point] (3) at (1.5,-2) {};
   \node[point] (4) at (-1.5,-2) {};
   
   \draw[dotted] (1.5,-.5) to [bend left=15] (1.5,-2);
   \draw[dotted] (-1.5,-.5) to [bend right=15] (-1.5,-2);
   
   \path[->]
   (0) edge [bend left=15] node[midway, above] {$a_0$} (1)
   (3) edge [bend left=15]  (2)
   (2) edge [bend left=15]  (4)
   (l-1) edge [bend left=15] node[midway, above] {$a_{l-1}$} (0);
\end{tikzpicture} 
}}
$$
together with the admissible set of relations $\R:=\{a_{i+1}a_i \mid i
\in \ZZ/ l\ZZ \}$. Viewing $\nn$ as a
dimension vector of $\mathcal C$, $\comp(\nn)$ is precisely the
representation variety $\rep(K \mathcal{C}/\langle \R\rangle,\nn)$. Furthermore, $K \mathcal C
/\langle \mathcal R \rangle$ is a representation-finite algebra whose
indecomposable representations are:
\begin{enumerate}
\item the simples $S_i$, $i \in \ZZ/ l\ZZ$;

\item for each $i \in \ZZ/ l\ZZ$, the representation $E_{i,i+1}$ defined to be $K$ at vertices $i,i+1$, the identity map along the arrow $a_i$, and zero at all the other vertices and arrows.
\end{enumerate}
By convention, in case $l=1$, $\mathcal C$ is just the one-loop quiver with $\mathcal R=\{a^2\}$ where $a$ denotes the loop of $\mathcal C$. The indecomposable representations in this case are the simple $S_0$ at vertex $0$ of $\mathcal C$ and the $2$-dimensional representation $J_{2,0}$, given by the $2\times 2$ nilpotent Jordan block along the arrow $a$.  

Consequently, if $l>1$, any $\nn$-dimensional representation of
$(\mathcal C, \R)$, $M$ can be written as:
$$
M \simeq \bigoplus_{i \in \ZZ/ l\ZZ} E_{i,i+1}^{t_i} \oplus \bigoplus_{i \in \ZZ/ l\ZZ} S_{i}^{s_i},
$$
where the non-negative integers $t_i$ and $s_i$, $i \in \ZZ/l\ZZ$, satisfy the following conditions:
$$
t_{i-1}+t_i+s_i=n_i \text{~~and~~} t_i=\rk M(a_i), \forall i \in \ZZ/ l\ZZ.
$$ 
If $l=1$, these equations become $2t_0+s_0=n_0$ and $t_0=\rk M(a)$ where $M \simeq  J_{2,0}^{t_0} \oplus  S_0^{s_0}$. In either case,  we can see that $M$ is uniquely determined, up to isomorphism, by its dimension vector and the rank sequence $(\rk M(a_i))_{i \in \ZZ/ l\ZZ}$.

In what follows, by a \emph{rank sequence for $\nn$}, we mean a sequence $\rr=(r_i)_{i \in \ZZ/ l\ZZ}$ such that there exists an $M \in \rep(K\mathcal C / \langle \mathcal R \rangle, \nn)$ with $r_i=\rk M (a_i), \forall i \in \ZZ/ l \ZZ$; in particular, such an $\rr$ must satisfy $r_{i-1}+r_i\leq n_i$ for all $i \in \ZZ/ l\ZZ$.

\subsection{Subvarieties given by rank conditions}
For a rank sequence $\rr$ for $\nn$, consider the closed subvariety
$$
\comp(\nn,\rr):=\{(A_i)_{i \in \ZZ/ l\ZZ} \in \comp(\nn) \mid \rk A_i \leq  r_i, \forall i \in \ZZ/ l\ZZ \}.
$$
From the discussion above, we get that
$$
\comp^0(\nn,\rr):=\{(A_i)_{i \in \ZZ/ l\ZZ} \in \comp(\nn) \mid \rk A_i=  r_i, \forall i \in \ZZ/ l\ZZ \}
$$
is the $\GL(\nn)$-orbit in $\comp(\nn)$ of 
$$
M^0(\nn,\rr):=\bigoplus_{i \in \ZZ/ l\ZZ} E_{i,i+1}^{r_i} \oplus \bigoplus_{i \in \ZZ/ l\ZZ} S_{i}^{n_i-r_i-r_{i-1}}.
$$

\begin{lemma} \label{lemma-irr-comp-circular} For any rank sequence $\rr$ for $\nn$, the variety $\comp(\nn, \rr)$ is normal. 
\end{lemma}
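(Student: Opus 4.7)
The plan is to construct a Kempf-style resolution of singularities of $\comp(\nn,\rr)$ from a homogeneous vector bundle over a product of Grassmannians, and then invoke standard results on collapsings of such bundles to deduce normality.

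Concretely, set $X=\prod_{i\in\ZZ/l\ZZ}\operatorname{Gr}(r_{i-1},n_i)$ and consider the incidence variety
\[
Z=\{((W_i),(A_i))\in X\times\comp(\nn)\mid \Ima A_{i-1}\subseteq W_i\subseteq\Ker A_i \text{ for all } i\}.
\]
The rank-sequence inequalities $r_{i-1}+r_i\leq n_i$ ensure this is nonempty. The first projection $\pi_1:Z\to X$ realizes $Z$ as the total space of a homogeneous vector bundle whose fiber over $(W_i)$ is $\bigoplus_i\Hom(K^{n_i}/W_i,W_{i+1})$; in particular, $Z$ is smooth and irreducible, of rank $\sum_i r_i(n_i-r_{i-1})$ over $X$. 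The second projection $\pi_2:Z\to\comp(\nn)$ is proper since $X$ is projective, and its image lies inside $\comp(\nn,\rr)$ because each $A_i$ factors as $K^{n_i}\twoheadrightarrow K^{n_i}/W_i\to W_{i+1}\hookrightarrow K^{n_{i+1}}$, forcing $\rk A_i\leq r_i$. Over the dense orbit $\comp^0(\nn,\rr)$ the dimension count $\dim\Ima A_{i-1}=r_{i-1}=\dim W_i$ forces $W_i=\Ima A_{i-1}$, so $\pi_2$ is an isomorphism there; hence $\pi_2:Z\to\comp(\nn,\rr)$ is a proper birational morphism from the smooth variety $Z$.

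To conclude normality, I would identify $Z\cong G\times^P\mathcal{W}$, where $G=\prod_i\GL(n_i,K)$, $P\leq G$ is the parabolic stabilizer of a fixed reference tuple $(W_i^0)\in X$, and $\mathcal{W}\leq V:=\prod_i\Mat_{n_{i+1}\times n_i}(K)$ is the corresponding $P$-submodule defined by the same subspace inclusions. Then $\pi_2$ is precisely the Kempf collapsing $G\times^P\mathcal{W}\to V$ with image $\comp(\nn,\rr)$, and Kempf's theorem on collapsings of homogeneous vector bundles (equivalently, the geometric technique computing $\pi_{2*}\mathcal{O}_Z$ via sheaf cohomology on $X$) yields normality---and in fact Cohen-Macaulayness and rational singularities---of $\comp(\nn,\rr)$.

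The main obstacle will be verifying the cohomological vanishing underlying this conclusion in the circular setting, specifically $R^i\pi_{2*}\mathcal{O}_Z=0$ for $i>0$ together with $\pi_{2*}\mathcal{O}_Z=\mathcal{O}_{\comp(\nn,\rr)}$. Via the Leray spectral sequence for $\pi_1$, this reduces to Bott-type vanishing on $X$ for symmetric powers of the dual of the bundle defining $Z$, which is expressible in terms of the tautological sub- and quotient bundles on the factor Grassmannians. Since $X$ is a product of type-$A$ partial flag varieties, the requisite vanishing is tractable via Borel-Weil-Bott, though the cyclic indexing demands some bookkeeping. An alternative route would be to appeal directly to existing results on varieties of (possibly circular) complexes, where normality and the stronger properties have been established by essentially the same collapsing technique.
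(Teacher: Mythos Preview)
Your approach is viable and genuinely different from the paper's. The paper does not carry out any collapsing or cohomology computation; instead it first shows by an elementary one-parameter degeneration that $\comp(\nn,\rr)=\overline{\comp^0(\nn,\rr)}$ is a single $\GL(\nn)$-orbit closure, and then observes that orbit closures of nilpotent representations of a cyclically oriented $\widetilde{A}$ quiver are, by Lusztig, locally isomorphic to affine Schubert varieties of type $A$, whose normality is known (Faltings). So the paper's argument is essentially two citations once the orbit-closure identification is made. Your incidence variety is the one-step Grassmannian analogue of the two-step flag resolution the paper uses later only for a dimension count (Lemma~\ref{lem:dimcomp}); your dimension and birationality checks are correct and match theirs.

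What your route buys is potentially more (Cohen--Macaulayness, rational singularities) and a self-contained argument, but at the cost of actually establishing the vanishing $R^i\pi_{2*}\mathcal{O}_Z=0$. You are right that this is the crux, and you should not undersell it: the cyclic coupling of the tautological bundles across the Grassmannian factors makes the Bott calculation genuinely more delicate than in the acyclic (ordinary complexes) case, and merely saying it is ``tractable'' leaves the proof incomplete. If you want to close the gap along these lines, the cleanest option is to cite a source where the vanishing for circular complexes has already been done (e.g.\ Mehta--Trivedi or Strickland), which is essentially your ``alternative route''. Otherwise, the paper's Lusztig--Faltings shortcut gets normality with far less work.
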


\begin{proof} We first show that $$\comp(\nn,\rr)=\overline{\comp^0(\nn,\rr)}.$$
In what follows, we give an elementary proof of this equality. We point out that a more general approach can be found in Zwara's paper \cite{Zwa-1999}.

The containment $\supseteq$ is immediate from semi-continuity of rank; to show the opposite containment, we take an arbitrary point of $\comp(\nn,\rr)$ and produce an explicit degeneration from $\comp^0(\nn,\rr)$ to that point. Indeed, let $M \in \comp(\nn,\rr)$, and set $r_i':=\rk M(a_i)$ and $\epsilon_i:=r_i-r'_i$ for all $i \in \ZZ/l\ZZ$. Then $M$ belongs to the $\GL(\nn)$-orbit of 
$$
M^0(\nn,\rr'):=\bigoplus_{i \in \ZZ/ l\ZZ} E_{i,i+1}^{r'_i} \oplus \bigoplus_{i \in \ZZ/ l\ZZ} S_{i}^{n_i-r'_i-r'_{i-1}}.
$$
Next, for each $\lambda \in K$, consider the representation
$$
\bigoplus_{i \in \ZZ/ l\ZZ} E_{i,i+1}^{r'_i} \oplus \bigoplus_{i \in \ZZ/ l\ZZ} S_{i}^{n_i-r_i-r_{i-1}}\oplus \bigoplus_{i \in \ZZ/l\ZZ} E_{i,i+1}(\lambda)^{\epsilon_i},
$$
where $E_{i,i+1}(\lambda)$ is $K$ at vertices $i$ and $i+1$, $\lambda$ along the arrow $a_i$, and zero elsewhere. This representation is isomorphic to $M^0(\nn,\rr)$ for $\lambda \neq 0$, and to $M^0(\nn,\rr')$ when $\lambda=0$. So, we get that $M \in\overline{\GL(\nn)M^0(\nn,\rr)}=\overline{\comp^0(\nn,\rr)}$. This proves our claim that $\comp(\nn,\rr)=\overline{\comp^0(\nn,\rr)}$. In particular, $\comp(\nn,\rr)$ is an irreducible closed subvariety of $\comp(\nn)$ for any rank sequence $\rr$ for $\nn$. 

To see normality, simply note that orbit closures in varieties of circular complexes are examples of orbit closures of nilpotent representations of cyclicly-oriented type $\tilde{A}$ quivers, so \cite[Theorem~11.3]{Lusztig} gives that the $\comp(\nn, \rr)$ are locally isomorphic to an affine Schubert variety of type $A$. These varieties are known to be normal, for example by \cite[Theorem~8]{Faltingsloop}.
%\end{remark}
\end{proof}

\begin{rmk}\label{remark-comp} It is clear that $\comp(\nn)$ is covered by the $\comp(\nn,\rr)$ with $\rr$ rank sequences for $\nn$. The lemma above tells us that this is a cover by irreducible closed subvarieties. So, the irreducible components of $\comp(\nn)$ are among the $\comp(\nn,\rr)$'s. It is now immediate to see that the irreducible components of $\comp(\nn)$ are precisely the $\comp(\nn, \rr)$ with $\rr$ maximal (with respect to the coordinate wise order) rank sequences for $\nn$. 
\end{rmk}

We need a dimension bound for irreducible components of varieties of circular complexes. We give a geometric proof, but note that it can also be proven representation theoretically by giving a lower bound on the dimension of endomorphism rings of elements of $\comp(\nn,\rr)$, thought of as representations of the associated quiver with relations.

\begin{lemma}\label{lem:dimcomp}
The dimension of $\comp(\nn,\rr)$ is less than or equal to $\frac{1}{2}\sum_{i=0}^{l-1} n_i^2$.
\end{lemma}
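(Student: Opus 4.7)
The plan is to use the orbit-closure description $\comp(\nn,\rr) = \overline{\GL(\nn) \cdot M^0(\nn,\rr)}$ furnished by the previous lemma to convert the dimension bound into a lower bound on $\dim_K \End(M^0(\nn,\rr))$. Since the stabilizer of a representation under the base-change action is a Zariski-open subset of its endomorphism algebra (the locus where every vertex-component is invertible), orbit-stabilizer yields
$$\dim \comp(\nn,\rr) = \dim \GL(\nn) - \dim_K \End(M^0(\nn,\rr)) = \sum_{i=0}^{l-1} n_i^2 - \dim_K \End(M^0(\nn,\rr)),$$
so it suffices to establish $\dim_K \End(M^0(\nn,\rr)) \geq \tfrac{1}{2}\sum_{i=0}^{l-1} n_i^2$.

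To compute $\End(M^0(\nn,\rr))$, I decompose $M^0(\nn,\rr) = \bigoplus_i E_{i,i+1}^{r_i} \oplus \bigoplus_i S_i^{s_i}$ (with the convention $E_{0,1} = J_{2,0}$ when $l=1$), where $s_i := n_i - r_i - r_{i-1} \geq 0$. Each $E_{i,i+1}$ is the indecomposable projective of $K\mathcal{C}/\langle\R\rangle$ at vertex $i$, so the standard identity $\dim\Hom(E_{i,i+1}, M) = \dim M_i$ delivers the Hom dimensions
$$\dim\Hom(E_{i,i+1}, E_{j,j+1}) = \delta_{i,j} + \delta_{i,j+1}, \qquad \dim\Hom(E_{i,i+1}, S_j) = \delta_{i,j},$$
$$\dim\Hom(S_i, E_{j,j+1}) = \delta_{i,j+1}, \qquad \dim\Hom(S_i, S_j) = \delta_{i,j},$$
with indices read modulo $l$, valid uniformly for all $l \geq 1$ (in particular for $l=1$ the two Kronecker deltas in the first formula collapse to give $\dim\End(J_{2,0}) = 2$).

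Multiplying these by the multiplicities $r_i, s_i$ and summing produces a quadratic expression in the $r_i$ and $s_i$ which I expect to collapse, via the cyclic-shift identity $\sum_i r_i^2 + \sum_i r_i r_{i-1} = \tfrac{1}{2}\sum_i (r_i + r_{i-1})^2$ together with the rewriting $r_i s_i + r_{i-1}s_i = (n_i - s_i)s_i$, into the clean formula
$$\dim_K \End(M^0(\nn,\rr)) = \tfrac{1}{2}\sum_i n_i^2 + \tfrac{1}{2}\sum_i s_i^2.$$
Since $s_i \geq 0$, the desired inequality $\dim \comp(\nn,\rr) = \tfrac{1}{2}\sum_i n_i^2 - \tfrac{1}{2}\sum_i s_i^2 \leq \tfrac{1}{2}\sum_i n_i^2$ follows at once, with equality exactly when $r_i + r_{i-1} = n_i$ for all $i$. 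No individual computation is hard; the only point requiring care is the algebraic packaging, which amounts to recognizing a completion of the square in the variables $u_i := r_i + r_{i-1}$ that produces the $(n_i - u_i)^2 = s_i^2$ terms.
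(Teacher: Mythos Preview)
Your argument is correct and complete. The Hom dimensions you list are all accurate (the identification $E_{i,i+1}=P_i$ as the indecomposable projective makes the first two immediate, and the remaining two are easy direct checks), and the algebra does indeed collapse to
\[
\dim_K \End(M^0(\nn,\rr)) = \tfrac{1}{2}\sum_i n_i^2 + \tfrac{1}{2}\sum_i s_i^2,
\]
whence the exact formula $\dim\comp(\nn,\rr)=\tfrac{1}{2}\sum_i n_i^2 - \tfrac{1}{2}\sum_i s_i^2$ and the desired inequality.

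This is a genuinely different route from the paper's. The paper proceeds geometrically: it builds an incidence variety $Z(\nn,\rr)$ sitting over a product of two-step flag varieties as a vector bundle and mapping birationally onto $\comp(\nn,\rr)$, then reads off the dimension from the base and fiber. You instead exploit the orbit-closure description already established in the preceding lemma and convert the problem into a Hom computation via orbit--stabilizer. In fact, the paper explicitly remarks just before its proof that a representation-theoretic argument via a lower bound on endomorphism dimensions is possible; your proposal carries this out in full. The two approaches land on the same closed formula (the paper's $k_i$ is your $s_i$). Your version is shorter and uses nothing beyond the structure of indecomposables already tabulated in the section; the paper's version has the merit of being self-contained (it does not invoke the orbit-closure equality from the previous lemma) and in passing produces a resolution of singularities of $\comp(\nn,\rr)$, though that is not used downstream.
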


\begin{proof} Consider the product of flag varieties
\[
Fl:=\prod_{i \in \ZZ/l\ZZ} Flag(r_{i-1}, n_i-r_i, V_i)
\]
where $Flag(r_{i-1}, n_i-r_i, V_i)$ denotes a two step flag variety (which becomes a Grassmannian if $r_{i-1}+r_i=n_i$) for each $i \in \ZZ/ l\ZZ$. 
Now consider the incidence variety:
\[
Z(\nn,\rr):= \lbrace (A_i , (R^1_i ,R^2_i))_{i \in \ZZ / l\ZZ}\in \comp(\nn,\rr)\times Fl \mid \Ima(A_{i-1})\subseteq R^1_i\subseteq R^2_i\subseteq \Ker (A_i), \forall i \in \ZZ / l\ZZ \rbrace .
\]
We have the two projections:
\vspace{-.5cm}
\begin{center}
\begin{tikzpicture}
    \node (Z) at (0,1) {$Z(\nn,\rr)$};
    \node (F) at (-2.5,0) {$Fl$};
    \node (C) at (2.5,0) {$\comp(\nn, \rr)$};
    \draw[->] (Z)--(F) node [midway,left] {$p$};
    \draw[->] (Z)--(C) node [midway,right] {$q$};
\end{tikzpicture}
\end{center}
The projection $p$ makes $Z(\nn ,\rr)$ a vector bundle over $\prod_{i
  \in \ZZ/l\ZZ} Flag(r_{i-1}, n_i-r_i, V_i)$, so $Z(\nn ,\rr)$ is
nonsingular, and the map $q$ is a birational isomorphism, since it is
an isomorphism over $\comp^0 (\nn ,\rr)$. In particular, \[\dim
\comp(\nn,\rr) = \dim Z(\nn,\rr) = \dim Fl + \dim
p^{-1}((R_i^1,R_i^2)_{i\in \ZZ/l\ZZ})\] where $(R_i^1, R_i^2)_{i\in
  \ZZ/l\ZZ}$ is an arbitrary flag in $Fl$. For such a fixed flag,
$p^{-1}((R_i^1,R_i^2)_{i\in \ZZ/l\ZZ}$ is isomorphic to
$\prod\limits_{i\in \ZZ/l \ZZ}\Hom_K(V_i/R_i^2, R_{i+1}^1)$, which has
dimension $\sum\limits_{i\in \ZZ/l\ZZ} r_i^2$. Meanwhile, the
formula for the dimension of a flag variety (see for example \cite[\S1.2]{brion}) in this case gives
\[\dim Fl=\sum\limits_{i\in \ZZ/l\ZZ}
  (r_{i-1}+r_i)(n_i-r_{i-1}-r_i)+r_{i-1}r_i.\] Therefore, 
\begin{align*} \dim \comp(\nn,\rr) &= \sum\limits_{i\in \ZZ/l\ZZ}
  (r_{i-1}+r_i)(n_i-r_{i-1}-r_i)+r_{i-1}r_i + r_i^2\\
&= \sum\limits_{i\in \ZZ/l\ZZ} (r_{i-1}+r_i)(n_i-r_{i-1}).\label{eq:dim-comp}\end{align*}

Let $k_i =n_i-r_i-r_{i-1}$. Note that with this notation, $\sum n_i^2 = \sum
(r_i+r_{i-1})^2 + 2k_i(r_i+r_{i-1})+k_i^2,$ while $\dim \comp(\nn,\rr)
= \sum (r_{i-1}+r_i)(r_i+k_i)$. Thus, we compute (suppressing the
index of summation where convenient):
\begin{align*}
  \sum n_i^2 - 2\dim \comp(\nn,\rr) &= \sum (r_i+r_{i-1})^2 +
                                      2k_i(r_i+r_{i-1})+k_i^2 -
                                      2(r_{i-1}+r_i)(r_i+k_i) \\
&= \sum (r_i+r_{i-1})(r_i+r_{i-1}+2k_i -2r_i-2k_i) + k_i^2\\
&= \sum (r_i+r_{i-1})(r_{i-1}-r_i) + \sum k_i^2\\
& =\sum r_{i-1}^2 - \sum r_i^2+\sum k_i^2.
\end{align*}
Note that the first two sums are equal, since indices are taken modulo
$l$, and the remaining sum is patently positive. Hence, the result
follows. 
\end{proof}

\section{Representation varieties of special biserial algebras and proof of the main result} \label{normal-sec}

Our main goal in this section is to check the normality condition in Theorem \ref{decomp-thm}(c), when the algebra in question is special biserial. We do this by reducing the considerations to varieties of circular complexes, whose irreducible components we already know are normal varieties (see Section \ref{normal-circular-complexes-sec}). 
\subsection{Special biserial and complete gentle algebras}
We begin by quickly recalling the definition of a special biserial
bound quiver algebra (see \cite{SkW}). A bound quiver $(Q,\mathcal R)$ is called a special biserial bound quiver if:

\begin{enumerate}
\item[(SB1)] for each vertex $v\in Q_0$ there are at most two arrows with head $v$, and at most two arrows with tail $v$;

\item[(SB2)] for every arrow $a \in Q_1$, there exists at most one arrow
  $b \in Q_1$ such that $ab \notin \langle \mathcal R\rangle$, and there exists at
  most one arrow $c \in Q_1$ such that $ca \notin \langle \mathcal R\rangle$.
\end{enumerate}

In what follows, by a quiver with relations $(Q, \R)$, we simply mean
a finite quiver $Q$ together with a finite set of (homogeneous) relations
where each relation is a linear combination of parallel paths of
length at least $2$. For a quiver with relations $(Q, \R)$, the
algebra $A=KQ/ \langle \R \rangle$ can be infinite-dimensional; it is
finite-dimensional precisely when $(Q,\R)$ is a bound quiver.

A quiver with relations $(Q,\R)$ is called \emph{gentle} if conditions (SB1) and (SB2) hold along with the following additional conditions:

%A special biserial bound quiver $(Q, \R)$ is called \emph{gentle} if
%the following additional properties hold (see  \cite[\S IX.6]{AS-SI-SK}): 

\begin{enumerate} 
\item[(G1)] if $a_1$ and $a_2$ are two arrows with the same tail $v$ then, for any arrow $b$ with head $v$, precisely one of the $a_1b$ and $a_2b$ belongs to $\mathcal R$;

\item[(G2)] if $b_1$ and $b_2$ are two arrows with the same head $v$ then, for any arrow $a$ with tail $v$, precisely one of the $ab_1$ and $ab_2$ belongs to $\mathcal R$; 

\item[(G3)] $\mathcal R$ consists of paths of length two.
\end{enumerate}

A finite-dimensional algebra obtained from a gentle quiver with
relations by adding only monomial relations is known as a string algebra, and one obtained by adding arbitrary relations is a special biserial algebra. The finite-dimensional indecomposable representations for these algebras are well-known. Specifically, an indecomposable representation is either a projective, or string, or band representation (see \cite{BR}, \cite{Rin75}).

As explained by Ringel in \cite{Rin2011}, a special biserial algebra
can be viewed as a quotient of a rather special infinite-dimensional
gentle algebra, called a complete gentle algebra.

\begin{definition} Let $Q^*$ be a quiver and $\mathcal R^*$ a finite set of monomial relations of length two. We say that $(Q^*,\mathcal R^*)$ is a \emph{complete gentle quiver with relations} if for every vertex $x \in Q^*_0$, there are precisely two arrows ending at $x$ and precisely two arrows starting at $x$, and for every arrow $a \in Q^*_1$, there is precisely one arrow $a' \in Q^*_1$ and precisely one arrow $a'' \in Q^*_1$ such that $aa'$ and $a''a$ belong to $\mathcal R^*$. 

A \emph{complete gentle algebra} is an algebra isomorphic to $KQ^*/\langle \mathcal R^* \rangle$ with $(Q^*,\mathcal R^*)$ a complete gentle quiver with relations. Note that a complete gentle algebra is infinite-dimensional.
\end{definition}

For the convenience of the reader we include the following lemma due to Ringel (see \cite[Section 2]{Rin2011}).

\begin{lemma} Any special biserial algebra $A=KQ/I$ is a quotient of a complete gentle algebra, where the quiver has the same vertex set as $Q$.
\end{lemma}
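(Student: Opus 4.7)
The plan is to enlarge $Q$ to a quiver $Q^*$ on the same vertex set and choose a set $\mathcal R^*$ of length-two monomial relations so that $(Q^*,\mathcal R^*)$ is complete gentle, together with a surjection $\pi \colon KQ^* \to KQ/I = A$ sending the arrows of $Q$ to themselves and the new arrows in $Q^*_1 \setminus Q_1$ to zero; the relations will be arranged so that $\pi(\mathcal R^*) = 0$, yielding the desired quotient map.

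For the enlargement, note that (SB1) gives $d_{in}^Q(x),\, d_{out}^Q(x) \leq 2$ for every vertex $x$. Because
\[
\sum_{x \in Q_0}\bigl(2-d_{in}^Q(x)\bigr) \;=\; 2|Q_0|-|Q_1| \;=\; \sum_{x\in Q_0}\bigl(2-d_{out}^Q(x)\bigr),
\]
one can attach at each $x$ the required number of new in- and out-``slots'' and then pair new out-slots with new in-slots arbitrarily to define new arrows (possibly loops or multi-arrows). The resulting quiver $Q^*$ has $Q^*_0=Q_0$ and every vertex has exactly two incoming and two outgoing arrows.

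Next, at each vertex $x$ with in-arrows $\{a_1,a_2\}$ and out-arrows $\{b_1,b_2\}$ in $Q^*$, I need to put into $\mathcal R^*$ a transversal $\{a_1 b_{\sigma(1)},\, a_2 b_{\sigma(2)}\}$ of the four length-two paths through $x$, for some permutation $\sigma$ of $\{1,2\}$. The compatibility required of $\pi$ is: whenever both $a_i$ and $b_{\sigma(i)}$ already lie in $Q_1$, the path $a_i b_{\sigma(i)}$ must lie in $I$. The main (and essentially only) technical step is to verify that (SB2) permits such a choice. In the hardest case, when all four arrows are already in $Q_1$, let $Z = \{(i,j) \mid a_i b_j \in I\}$; the two statements of (SB2) applied to the $a_i$ and to the $b_j$ respectively force $Z$ to have at least one entry in every row and every column, equivalently $Z^c$ is a partial transversal of size at most $2$. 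An elementary case check then shows that $Z$ contains either $\{(1,1),(2,2)\}$ or $\{(1,2),(2,1)\}$, and either transversal is admissible. When only three or two of the four arrows lie in $Q_1$, the constraint involves strictly fewer paths and is handled by a single application of (SB2) or imposes no constraint at all.

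Setting $\mathcal R^*$ to be the union over $x$ of the chosen transversals makes $(Q^*,\mathcal R^*)$ a complete gentle quiver with relations by construction. Moreover, for each $r \in \mathcal R^*$, either $r$ involves a new arrow (so $\pi(r)=0$) or $r$ is a length-two path of $Q$ lying in $I$ (so $\pi(r)=0$). Therefore $\pi$ factors through a surjection $KQ^*/\langle \mathcal R^*\rangle \twoheadrightarrow KQ/I = A$, exhibiting $A$ as a quotient of the complete gentle algebra $KQ^*/\langle \mathcal R^*\rangle$ with $Q^*_0 = Q_0$, as desired.
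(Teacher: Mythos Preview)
Your argument is correct. The core idea---enlarging $Q$ to a $2$-regular quiver on the same vertex set and choosing length-two relations compatible with the special biserial conditions---is the same as the paper's, but the organization differs. Ringel's proof (reproduced in the paper) first reduces to the gentle case without justification and then adds one arrow at a time inductively, re-verifying gentleness after each step. You instead add all new arrows at once and handle the choice of $\mathcal R^*$ vertex-by-vertex, using the clean observation that (SB2) forces the set $Z$ of forbidden paths to meet every row and column of the $2\times 2$ grid, hence to contain a transversal. Your version is slightly more self-contained, since it treats the special biserial algebra directly rather than passing through an intermediate gentle presentation; the paper's iterative version has the mild advantage that the inductive step is easier to state.
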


\begin{proof} It is enough to show that an algebra given by a gentle
  quiver with relations $(Q,\R)$ is a quotient of a complete gentle
  algebra. Given a gentle quiver with relations $(Q,\R)$, we iteratively add arrows and relations to yield a complete gentle algebra. By conditions (SB1) and (SB2), it is clear that $\lvert Q_1\rvert \leq 2\lvert Q_0\rvert$ with equality precisely when $(Q, \R)$ is a complete gentle quiver with relations.

If $\lvert Q_1\rvert < 2 \lvert Q_0\rvert$ then there exist a vertex
$x$ that is the starting point of at most one arrow, $a$, and a vertex
$y$ that is the ending point of at most one arrow $b$. Define $Q^\prime$ to
be the quiver obtained from $Q$ by adding an arrow $c$ from $x$ to
$y$, and $\R^\prime$ to be the set of relations obtained by adding the
following length-two paths to $R$: $ca^\prime$ if $a^\prime$ is an arrow ending at
$x$ and $aa^\prime$ is not in $R$, and $b^\prime c$ if $b^\prime$ is an arrow starting at
$y$ and $b^\prime b$ is not in $R$. The pair $(Q^\prime,\R^\prime)$ is gentle and $\lvert
Q_1\rvert +1=\lvert Q_1^\prime \rvert \leq 2 \lvert Q^\prime_0\rvert$.  The addition of $2\lvert Q_0\rvert - \lvert Q_1\rvert$ arrows in this way
will produce a complete gentle algebra. The lemma now follows. 
\end{proof}

To describe the finite-dimensional indecomposable representations of complete gentle algebras, one uses the recipe developed for dealing with finite-dimensional gentle/string algebras. In particular, the finite-dimensional indecomposable representations are given again by bands and strings. This is due to the work of Ringel in \cite{Rin75}, and of Crawley-Boevey in \cite{CB6} where the more general case of finitely controlled or pointwise artinian indecomposable representations over infinite-dimensional string algebras is discussed.

\subsection{Representation varieties of complete gentle algebras}
Let $(Q^*,\mathcal R^*)$ be a complete gentle quiver with relations,  $\Lambda=KQ^*/ \langle \mathcal R^* \rangle$ its complete gentle algebra, and $\dd \in \ZZ_{\geq 0}^{Q^*_0}$ a dimension vector. 
In what follows, by an \emph{effective oriented cycle} of $(Q^*,\mathcal R^*)$, we mean an oriented cycle $\mathcal C=a_1 \ldots a_n$ of $Q^*$ such that $a_i \neq a_j$ for $i \neq j$, and $a_1a_2, \ldots, a_{n-1}a_n,\ a_na_1 \in \mathcal R^*$. (If $n=1$, we say that $\mathcal C=a_1$ is an effective oriented cycle if $a_1^2 \in \mathcal R^*$).

Since each arrow belongs to a unique effective oriented cycle, $Q_1$
can be written as a disjoint union of subsets of the form
$\{a_i\}_{i=1}^n$ ($n$ varying with the subset) where  $\mathcal C=a_1
\ldots a_n$ is an effective oriented cycle.  Therefore, the
representation variety $\rep(\Lambda,\dd)$ is a product of varieties
of circular complexes. Hence, the irreducible components of
$\rep(\Lambda,\dd)$ are normal varieties by Lemma
\ref{lemma-irr-comp-circular}. We remark that the same argument holds
in the case of arbitrary gentle algebras, whose representation
varieties have irreducible components that are products of circular
and ordinary complexes. However, we choose to work with complete
gentle algebras to avoid case-by-case analysis in the forthcoming
proofs. 

To describe the irreducible components in more concrete terms, let us recall that a sequence $\rr=(r_a)_{a \in Q^*_1}$ of non-negative integers is called a \emph{rank sequence for $\dd$} if there exists an $M \in \rep(\Lambda,\dd)$ with $r_a=\rk M(a), \forall a \in Q^*_1$. Note that this condition implies that $r_a+r_b \leq \dd(ta)$ for any two arrows $a,b$ with $ab \in \mathcal R^*$. A rank sequence for $\dd$ which is maximal with respect to the coordinate-wise order is called a \emph{maximal rank sequence for $\dd$}.

It follows from Lemma \ref{lemma-irr-comp-circular} that for any rank sequence $\rr$ for $\dd$, the set
$$
\rep(\Lambda,\dd,\rr):=\{M \in \rep(\Lambda,\dd) \mid \rk M(a) \leq r_a, \forall a \in Q^*_1\}
$$
is a normal subvariety of $\rep(\Lambda,\dd)$. Moreover, by Remark \ref{remark-comp}, the irreducible components of $\rep(\Lambda,\dd)$ are precisely those $\rep(\Lambda,\dd,\rr)$ with $\rr$ a maximal rank sequence for $\dd$. 

\begin{lemma}\label{lem-dim-mod}
Let $\Lambda$ be a complete gentle algebra and $\rr$ a rank sequence for a dimension vector $\dd$.  Then $\dim \rep(\Lambda,\dd,\rr) \leq \sum_{i \in Q^*_0} \dd(i)^2 = \dim \GL(\dd)$.
\end{lemma}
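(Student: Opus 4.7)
The plan is to exploit the product decomposition of $\rep(\Lambda, \dd)$ into varieties of circular complexes, one per effective oriented cycle of $(Q^*, \R^*)$, and then apply the dimension bound from Lemma \ref{lem:dimcomp} componentwise. The key combinatorial ingredient is that in a complete gentle quiver, every vertex is the tail of \emph{exactly} two arrows.

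First, I would recall from the discussion preceding the lemma that since each arrow of $Q^*$ belongs to a unique effective oriented cycle, $Q_1^*$ partitions as a disjoint union indexed by the set $\mathfrak{C}$ of effective oriented cycles. This yields a product decomposition
\[
\rep(\Lambda,\dd,\rr) \;=\; \prod_{\mathcal C \in \mathfrak{C}} \comp(\nn_{\mathcal C}, \rr_{\mathcal C}),
\]
where for an effective cycle $\mathcal C = a_1 \cdots a_{l_{\mathcal C}}$, the associated data are $\nn_{\mathcal C} = (\dd(ta_i))_{i \in \ZZ/l_{\mathcal C}\ZZ}$ and $\rr_{\mathcal C} = (r_{a_i})_{i \in \ZZ/l_{\mathcal C}\ZZ}$.

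Next, I would apply Lemma \ref{lem:dimcomp} to each factor to get
\[
\dim \comp(\nn_{\mathcal C}, \rr_{\mathcal C}) \;\leq\; \tfrac{1}{2} \sum_{a \in \mathcal C} \dd(ta)^2,
\]
and then sum over all effective cycles. Because the cycles partition $Q_1^*$, the double sum collapses to a sum over all arrows:
\[
\dim \rep(\Lambda,\dd,\rr) \;\leq\; \tfrac{1}{2} \sum_{a \in Q_1^*} \dd(ta)^2.
\]

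Finally, I would invoke the defining property of a complete gentle quiver: every vertex $v \in Q_0^*$ is the tail of precisely two arrows. Therefore $\sum_{a \in Q_1^*} \dd(ta)^2 = 2 \sum_{v \in Q_0^*} \dd(v)^2$, and the bound becomes $\sum_{v \in Q_0^*} \dd(v)^2 = \dim \GL(\dd)$, as required. There is no real obstacle here; the proof is essentially a bookkeeping exercise once one reinterprets $\rep(\Lambda, \dd, \rr)$ as a product of $\comp(\nn, \rr)$'s and remembers the local "two-in, two-out" property that distinguishes complete gentle quivers from arbitrary gentle ones.
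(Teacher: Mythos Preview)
Your proof is correct and follows essentially the same approach as the paper's: decompose $\rep(\Lambda,\dd,\rr)$ as a product of varieties of circular complexes, apply Lemma~\ref{lem:dimcomp} to each factor, and then use that every vertex of a complete gentle quiver is incident to exactly two arrows of each orientation so that the resulting double sum collapses to $\sum_{v\in Q_0^*}\dd(v)^2$. The only difference is cosmetic---you index the sum by arrows via $\dd(ta)^2$, while the paper phrases it as ``each vertex appears exactly twice among the $(n_i^j)$''.
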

\begin{proof}
We have noted above that $\rep(\Lambda,\dd,\rr)$ is isomorphic to a product of varieties of complexes, say $\prod_j \comp(\nn^j, \rr^j)$.
Then we have by Lemma \ref{lem:dimcomp} that 
$$\dim \rep(\Lambda,\dd,\rr) = \sum_j \dim \comp(\nn^j, \rr^j) \leq \sum_j \frac{1}{2}\sum_i (n^j_i)^2.$$
Now for each vertex $i \in Q^*_0$, the value $\dd(i)$ appears exactly twice among the values $(n^j_i)$, since each vertex of $Q^*_0$ is a vertex for precisely two varieties of complexes (or the same one twice).  So the last double sum simplifies to $\sum_{i \in Q^*_0} \dd(i)^2$.
\end{proof}

\subsection{Proof of the main result}

For two given irreducible components $C\subseteq \rep(A,\ff)$ and $C' \subseteq \rep(A,\ff')$, we set: 
$$\hom_A(C,C')=\min \{\dim_K \Hom_A(X,Y) \mid (X,Y) \in C \times C'\}.$$
We are now ready to prove:

\begin{prop} \label{normality-prop-complete-gentle} Let $A=KQ/I$ be an arbitrary special biserial bound quiver algebra.
Let $C_i \subseteq \rep(A,\dd_i)$, $1 \leq i \leq m$, be irreducible components such that a general representation in each $C_i$ is Schur, and that $\hom_A(C_i,C_j)=0$ for all $1 \leq i, j \leq m$. Then 
$C:=\overline{C_1 \oplus \ldots \oplus C_m}$ is a normal variety. 
\end{prop}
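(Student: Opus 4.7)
The plan is to reduce the problem to the complete gentle algebra $\Lambda = KQ^*/\langle \R^* \rangle$ of which $A$ is a quotient by Ringel's lemma. This yields a closed $\GL(\dd)$-invariant embedding $\rep(A,\dd) \hookrightarrow \rep(\Lambda,\dd)$, and moreover $\Hom_A(M,N) = \Hom_\Lambda(M,N)$ for any $A$-modules $M,N$, since the arrows of $Q^*$ not in $Q$ act as zero on $A$-modules. In particular, the Schur and Hom-orthogonality hypotheses transfer verbatim to the $\Lambda$-module structure. My target is to identify $C$ with an irreducible variety of the form $\rep(\Lambda,\dd,\rr)$ for an appropriate rank sequence $\rr$; normality will then be immediate, since $\rep(\Lambda,\dd,\rr)$ factors as a product $\prod_j \comp(\nn^j,\rr^j)$ over the effective oriented cycles of $Q^*$, each factor normal by Lemma \ref{lemma-irr-comp-circular}.

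The first step is to show that each $C_i$ coincides with $\rep(\Lambda,\dd_i,\rr^{(i)})$, where $\rr^{(i)}$ denotes the $\Lambda$-rank sequence of a generic Schur representation $M_i \in C_i$. Since finite-dimensional $\Lambda$-modules are classified up to isomorphism by their dimension vector and rank sequence (see Section \ref{normal-circular-complexes-sec}), the $\GL(\dd_i)$-orbit of $M_i$ in $\rep(\Lambda,\dd_i)$ is the open stratum $\comp^0(\dd_i,\rr^{(i)})$, whose closure is the irreducible variety $\rep(\Lambda,\dd_i,\rr^{(i)})$. The inclusion $C_i \subseteq \rep(\Lambda,\dd_i,\rr^{(i)})$ follows from semi-continuity of matrix rank. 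Conversely, $\overline{\GL(\dd_i)\cdot M_i} = \rep(\Lambda,\dd_i,\rr^{(i)})$ is irreducible, lies inside the closed $\GL(\dd_i)$-invariant subvariety $\rep(A,\dd_i)$, and contains the generic point $M_i$ of $C_i$, so it must sit inside the unique irreducible component of $\rep(A,\dd_i)$ through that generic point, namely $C_i$ itself.

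The second step runs the analogous argument at the level of $C$. Set $M := M_1 \oplus \cdots \oplus M_m$; since ranks add under direct sums, the $\Lambda$-rank sequence of $M$ is $\rr := \sum_i \rr^{(i)}$. A generic point of $C = \overline{C_1 \oplus \cdots \oplus C_m}$ is by construction a $\GL(\dd)$-translate of a direct sum of generic elements of the $C_i$, and so it also has $\Lambda$-rank sequence $\rr$. Repeating the two-sided containment argument --- semi-continuity of rank gives $C \subseteq \rep(\Lambda,\dd,\rr)$, while the orbit-closure identity $\overline{\GL(\dd)\cdot M} = \rep(\Lambda,\dd,\rr)$ combined with the closedness and $\GL(\dd)$-invariance of $\rep(A,\dd)$ inside $\rep(\Lambda,\dd)$ gives the reverse inclusion --- yields $C = \rep(\Lambda,\dd,\rr)$, whence $C$ is normal.

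The most delicate step is the identifications $C_i = \rep(\Lambda,\dd_i,\rr^{(i)})$ and $C = \rep(\Lambda,\dd,\rr)$. A priori, the representation $M_i$ might belong to several irreducible components of the reducible variety $\rep(\Lambda,\dd_i)$; what rules this out is the fact that $\rr^{(i)}$ is the rank sequence attained on a dense open subset of the irreducible $C_i$, together with the complete-gentle classification of $\Lambda$-modules up to isomorphism by their rank sequences.
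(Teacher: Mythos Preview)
Your argument is correct and takes a genuinely different route from the paper's. The paper establishes $C_i=\rep(\Lambda,\dd_i,\rr^{i})$ by a dimension count: citing \cite[Lemma~3]{CC13} to get $\dim C_i=\dim\GL(\dd_i)$ from the Schur hypothesis, then using Lemma~\ref{lem-dim-mod} to bound $\dim\rep(\Lambda,\dd_i,\rr^{i})$ from above by the same number. For $C$ itself the paper again argues by dimension, this time invoking both the Schur and Hom-orthogonality hypotheses to compute $\dim_K\End_A(M)=m$ for a general $M\in C$, and combining this with an $m$-parameter family of orbits to obtain $\dim C=\dim\GL(\dd)$. Your argument bypasses all of this by observing directly that $\rep(\Lambda,\dd,\rr)$ is a \emph{single} $\GL(\dd)$-orbit closure (Lemma~\ref{lemma-irr-comp-circular} on each cyclic factor), so as soon as the generic point $M$ of $C$ lies in the open stratum and $C$ is closed and $\GL(\dd)$-invariant, one gets $\rep(\Lambda,\dd,\rr)=\overline{\GL(\dd)\cdot M}\subseteq C$ for free. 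This is more elementary (no appeal to \cite{CC13} or to Lemma~\ref{lem-dim-mod}) and in fact proves a stronger statement: the Schur and $\hom_A(C_i,C_j)=0$ hypotheses are never used, so $\overline{C_1\oplus\cdots\oplus C_m}$ is normal for arbitrary irreducible components $C_i$ over a special biserial algebra.

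One small wording issue: in your step for $C$, the reverse inclusion should invoke the closedness and $\GL(\dd)$-invariance of $C$ itself, not of $\rep(A,\dd)$. What you wrote only yields $\rep(\Lambda,\dd,\rr)\subseteq\rep(A,\dd)$, which is weaker than the needed $\rep(\Lambda,\dd,\rr)\subseteq C$. The fix is immediate, since $C$ is by construction a $\GL(\dd)$-invariant closed set containing $M$.
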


\begin{proof} 
Take a complete gentle quiver with relations $(Q^*, \mathcal R^*)$ such that $Q^*_0 = Q_0$ and $A$ is a quotient of $\Lambda:=KQ^*/\langle \mathcal R^* \rangle$ by an ideal generated by arrows and admissible relations. We will find a rank sequence $\rr$ for $\dd:=\sum_i \dd_i$ such that $C=\rep(\Lambda,\dd,\rr)$, with the latter normal by Lemma \ref{lemma-irr-comp-circular}.

Now, for each $1 \leq i \leq m$, we have that $\dim C_i=\dim \GL(\dd_i)$ by the same arguments in \cite[Lemma 3]{CC13}, since $C_i$ is not an orbit closure.  But, we can also view $C_i \subseteq \rep(\Lambda,\dd_i)$,  where the maximal dimension of an irreducible component is $\dim \GL(\dd_i)$ by Lemma \ref{lem-dim-mod}. Therefore, $C_i$ has to be an irreducible component of $\rep(\Lambda,\dd_i)$ for each $1 \leq i \leq m$; in particular, each $C_i$ is a normal variety.

Next, for each $1 \leq i \leq m$, write $C_i=\rep(\Lambda,\dd_i,
\rr^i)$ where $\rr^i=(r^i_a)_{a \in Q^*_1}$ is a (maximal) rank
sequence for $\dd_i$. For the rank sequence $\rr:=\rr^1+\ldots
+\rr^m$, we have that $C=\overline{\bigoplus_{i=1}^m \rep(\Lambda,
  \dd_i, \rr^i)} \subseteq \rep(\Lambda, \dd, \rr)$, with the latter
being normal of dimension at most $\dim\GL(\dd)$ by Lemma \ref{lem-dim-mod}.  So we will show that $\dim C = \dim \GL(\dd)$ as well, forcing equality. 

Let $M \in C$ be a general element, so that $M$ is a direct sum of Schur representations with no nonzero morphisms between these summands.  Thus $\dim_K \End_A(M) = m = \dim \Stab_{\GL(\dd)}(M)$.  We also know by the general relation between dimensions of orbits and stabilizers that 
\begin{equation}\label{eq:dimGLd}
\dim \GL(\dd) = \dim \GL(\dd)\cdot M + \dim \Stab_{\GL(\dd)}(M) = \dim \GL(\dd)\cdot M + m.
\end{equation}
On the other hand, $C$ has a dense $m$-parameter family of distinct orbits, so for a general $M \in C$ we have that
\begin{equation}\label{eq:dimC}
\dim C = \dim \GL(\dd)\cdot M + m.
\end{equation}
Combining equations \eqref{eq:dimGLd} and \eqref{eq:dimC} then finishes the proof.
\end{proof}

\begin{proof}[Proof of Theorem \ref{main-thm}]  
Let $Y$ be an arbitrary irreducible component of $\M(A,\dd)^{ss}_{\theta}$. Then there exists an irreducible component $C$ of $\rep(A,\dd)$ such that $Y=\M(C)^{ss}_{\theta}$.
Consider the $\theta$-stable decomposition
$$C=m_1\cdot C_1 \pp \ldots \pp m_l\cdot C_l$$
as in Definition \ref{def:thetastable}.

By Theorem \ref{decomp-thm}, we can assume that $C=\overline{C_1^{m_1} \oplus \ldots \oplus C_l^{m_l}}$ and no $C_i$ is an orbit closure, so each $C_i$ must contain a dense family of band representations.
Furthermore, we have a morphism 
\[
\Psi\colon  S^{m_1}(\mathcal{M}(C_1)^{ss}_{\theta}) \times \ldots \times S^{m_l}(\mathcal{M}(C_l)^{ss}_{\theta})  \to \M(C)^{ss}_{\theta}
\]
which is surjective, finite, and birational.

Next, we claim that $\hom_{A}(C_i,C_j)=0$ for all $1 \leq i,j \leq l$. Indeed, for any $1 \leq i, j \leq l$, simply choose two non-isomorphic $\theta$-stable representations $X_i$ and $Y_j$ from $C_i$ and $C_j$, respectively; this is always possible since each $C_i$ is not an orbit closure. Then $\Hom_{A}(X_i,Y_j)=0$ and so $\hom_{A}(C_i,C_j)=0$. A general representation in each $C_i$ is Schur since the $C_i$ are $\theta$-stable. It now follows from Proposition \ref{normality-prop-complete-gentle} that $C$ (keeping in mind the reductions above) is normal.

Since $A$ is tame, we already know that each $\M(C_i)^{ss}_{\theta}$ is a rational projective curve (see, for example, \cite[Proposition 12]{CC15}). But $\M(C_i)^{ss}_{\theta}$ is also normal since $C_i$ is normal by the $m=1$ case in Proposition \ref{normality-prop-complete-gentle}; hence $\M(C_i)^{ss}_{\theta} \simeq \PP^1$ for all $1 \leq i \leq l$. We conclude that $\M(C)^{ss}_{\theta} \simeq \prod_{i=1}^l \PP^{m_i}$.
\end{proof}

\bibliography{biblio-special-biserial}\label{biblio-sec}
\end{document}